\newtheorem{thm}{Theorem}
\newtheorem{prop}{Proposition}
\newtheorem{lem}{Lemma}
\newtheorem*{mainthm}{Main Theorem}
\theoremstyle{definition}
\newtheorem{defn}{Definition}
\def\om{\omega}
\def\Om{\Omega}
\title[]{Uncountably many $2$-generated just-infinite \\ branch pro-2 groups}
\author{Mustafa G\"okhan  Benli, Rostislav Grigorchuk}
\date{3/19 /2015\\
The second author was supported by NSF grant DMS-1207699}
\address{Rostislav Grigorchuk\\  Texas A\& M University, College Station, Tx, Usa }
\email{grigorch@math.tamu.edu}
\address{Mustafa G\"okhan Benli\\  Middle East Technical University, Ankara, Turkey}
\email{benli@metu.edu.tr}
\begin{document}

   \begin{abstract}
   The aim of this note is to prove that there are $2^{\aleph_0}$ non-isomorphic 2 generated
    just-infinite branch pro-2 groups.
   \end{abstract}

  \maketitle
\section{Introdcution}

In 1937 B. Neumann \cite{Neumann_37} proved that there are $2^{\aleph_0}$  non-isomorphic $2$-generated abstract groups (see \cite[Chapter III]{MR1786869}). 
In 1983 the second author suggested a construction of continuously
many $2$-generated torsion $2$-groups $M_\om$ of intermediate growth and showed that among these
there are $2^{\aleph_0}$ distinct groups not only up to isomorphism but up to the weaker equivalence relation, quasi-isometry. One of the important features of the groups $M_\om$ is the fact that they are branch just-infinite groups.

Recall that a group is just-infinite if it is infinite but every proper quotient is finite.
Just-infinite groups lie on the border between finite and infinite groups. Every finitely generated
infinite group can be mapped onto a a just-infinite group. The class of branch groups
was introduced by the second author in \cite{MR1765119}. These groups act on a spherically
homogeneous rooted tree  such that the rigid stabilizers of levels have finite index 
(see next section for definitions). The fundamental fact about branch groups is that 
just-infinite branch groups constitute one of the three classes which the class of just infinite 
groups naturally splits \cite{MR1765119} (the other two classes are related to hereditarily just-infinite groups and simple groups).

Branch groups and just-infinite groups were also defined within the category of profinite groups
where now a dichotomy holds: Every just-infinite profinite group either is of branch type
or is related to hereditarily just-infinite groups \cite{MR1765120}.

A natural question is about the cardinality of the class of finitely generated profinite groups.
More precisely, one can fix the number of generators to $2$ and restrict the consideration to pro-$p$
groups where $p$ is a prime. Suprisingly, it seems that there are no results  in literature showing 
that the cardianlity of this class is $2^{\aleph_0}$. This question was raised recently in private conservations of the second author with A. Lubotzky and D. Segal. As a result, D. Segal suggested an elegant construction of $2^{\aleph_0}$ $2$-generator centre-by-metabelian pro-$p$ groups [D. Segal, private communication]. A. Lubotzky asked an analoguous question for finitely presented profinite groups which was answered in  \cite{snopce}, where for each $m\ge 3$, $2^{\aleph_0}$ non-isomorphic 
finitely presented metabelian  pro-$p$ groups are constructed. 

The main result of this paper is the following:

\begin{mainthm}
There exists $2^{\aleph_0}$ non-isomorphic $2$ generated just-infinite branch pro-$2$ groups.
\end{mainthm}

Unlike the groups of \cite{snopce}, our examples are not finitely presented (by arguments from \cite{MR2949211}), but have additional properties of being branch and just-infinite.  It is not known
if there are finitely presented branch groups. It would be interesting to know the cardinality
of the class of finitely generated hereditarily just-infinite pro-$p$ groups, that is, pro-$p$ 
groups with all subgroups of finite index being just-infinite groups.

The proof of the main theorem is based on a number of previous results, in particular the results of
L. Lavreniuk and V. Nekrashevych from  \cite{MR1890957}.

Although we do not do this here, the same techniques and construction of $p$-groups of \cite{MR784354}
may be used to prove them same result for the class of pro-$p$ groups, for any prime $p$.
\section{Preliminaries}

\subsection{Profinite groups and profinite completions} \mbox{}

\vspace{0.1cm}

We refer to \cite{MR2599132} for a detailed account on profinite groups. We recall some basic material related to profinite groups and completions.

Let $C$ b a class of finite groups. An inverse limit of a (surjective) inverse system of  groups in $C$
is called a pro-$C$ group. Equivalently, $G$ is a pro-$C$ group if it is compact, totally disconnected and  for any open normal subgroup $N$, $G/N$ lies in the class $C$. When the class $C$ is
the class of all finite groups (respectively, the class of all finite $p$-groups for a prime $p$), then
we will get the class of  profinite groups (respectively, pro-$p$ groups).

Given a group $G$, let $N_C(G)$ be the collection of finite index normal subgroups  $N$ of $G$, for which
$G/N$ lies in the class $C$. The pro-$C$ topology on $G$ is the topology having the collection 
$\{N \in N_C(G)\}$ as a neighbourhood basis around the identity element in $G$. This is the coarsest topology
on $G$ making it a topological group for which the canonical maps $G \to G/N, N \in N_C(G)$  are continuous (here  $G/N$ has the discrete topology). Assume that $N_C(G)$ is filtered from below, that is for any $N_1,N_2\in N_C(G)$ there is $N \in N_C(G)$ such that $N\le N_1 \cap N_2$ (observe that this
is true if $C$ is the class of finite groups or the class of finite $p$-groups, for a prime $p$). 
In this case, $N_C(G)$ is a directed set with order $N_1 \preceq N_2$ if $N_2 \subset N_1$, and
$\{G/N,P_{N,H},N\preceq H\in N_C(G)\}$ is an inverse system where $P_{N,H}:G/H \to G/N$ are the conical 
homomorphisms. The inverse limit of this inverse system is called the \textit{pro-C completion of} $G$ 
and is denoted by $\widehat{G}_{N_C(G)}$. If $C$ is the class of finite groups (resp. the class of finite $p$-groups) the profinite completion (resp. the pro-$p$ completion) of $G$ will be denoted by $\widehat{G}$ (resp. 
$\widehat{G}_p$).
If $G$ is residually $C$, that is $\bigcap_{n\in N_C(G)} N=1$, then one has a continuous embedding 
$G \to \widehat{G}_{N_C(G)}$ with dense image. 

\begin{defn}  \mbox{}

\begin{enumerate}
\item A group $G$ is called just-infinite if every proper quotient of $G$ is finite.
\item A profinite group $G$ is called just-infinite if every proper quotient by a closed normal
subgroup is finite.
\end{enumerate}
\end{defn}

Every finitely generated infinite group can be mapped onto a just-infinite group and every 
finitely generated pro-$p$ group can be mapped onto a just-infinite pro-$p$ group \cite{MR1765119}.
By an important result N. Nikolov and D. Segal \cite{MR2276769}, in a finitely generated profinite group a subgroup is open if and only if has finite index. Therefore, if $\phi:G \to H$ is a homomorphism
between profinite groups and $G$ is finitely generated, then $\phi$ is continuous.

\subsection{Groups of tree automorphisms and the congruence subgroup property} \mbox{}

\vspace{0.1cm}
We refer to \cite{MR1765119,MR2162164} for detailed accounts on groups acting on rooted trees.
Let $X=\{0,1,\ldots,d-1\}$ be a finite set with $d$ elements and let $X^\ast$ denote the set of finite words (sequences)  over $X$.
Elements of $X^\ast$ are naturally in bijection with the vertices of a rooted regular $d$-ary tree
in which the root is identified  with the empty word and vertices at distance $n$ from the root 
with words of length $n$. We will denote the length of an element $u\in X^\ast$ by $|u|$ and will 
say that $u$ lies in level $|u|$ of the tree.  We will not distinguish between $X^\ast$ and the tree it describes. 

The group $Aut(X^\ast)$ consists of all graph automorphisms of $X^\ast$. Equivalently, $Aut(X^\ast)$ consists
of all bijections of $X^\ast$ which fix the empty word and preserve prefixes, that is, if two words
have common prefix of length $n$, so do their images. It follows that an automorphism must preserve the
level of a vertex. We will say a subgroup $G\le Aut(X^\ast)$ is \textit{level transitive} if it acts
transitively on each level of the tree.

Given $g\in Aut(X^\ast)$ and $u\in X^\ast$,
the \textit{section} of $g$ at $u$ is the automorphism $g_u$ uniquely determined by
$$g(uv)=g(u)g_u(v)\;\; \text{for all} \; v \in X^\ast $$
There is an isomorphism  

\begin{equation} \label{wr}
\begin{array}{c}
Aut(X^\ast)\cong S_d \ltimes Aut(X^\ast)^d  \\
g\mapsto (\pi_g;g_0,\ldots,g_{d-1})
\end{array}
\end{equation}
 where $S_d$ is the symmetric group on $X$ and $\pi_g$ is the permutation determined 
by the action of $g$ onto $X$.

Let $X^{(n)}$ denote the set of words of length at most $n$ in $X^\ast$. $X^{(n)}$ describes the 
finite rooted tree of vertices up to level $n$. For each $n$, we have a map $r_n:Aut(X^{(n+1)}) \to Aut(X^{(n)})$ given by restriction.
It follows that $Aut(X^\ast)$ is isomorphic to the inverse limit of the inverse system 
$\{Aut(X^{(n)}),r_n\}$, and hence is a profinite group. Given distinct $g,h\in Aut(X^\ast)$,
define a metric (in fact an ultra metric) on $Aut(X^\ast)$ as follows: $d(g,h)=2^{-m(g,h)}$
where $m(g,h)$ is the maximum number $n$ such that the actions of $g$ and $h$ on $X^\ast$ agree up to
level $n$. A straightforward argument shows that  the topology generated by this metric is
the same as the topology on $Aut(X^\ast)$ as a profinite group. 
For $n\ge 1$, the $n$-th level stabilizer  $St(n)$ is the subgroup fixing (point wise) every vertex on 
level $n$. Each level has finitely many vertices, from which it follows that for each $n$, $St(n)$
is  of finite index and since $\bigcap_{ n\ge 1} St(n)=1$, $Aut(X^\ast)$ (and hence any subgroup of it)
is residually finite.

\begin{defn}
A subgroup $G\le Aut(X^\ast)$ is said to have the congruence subgroup property, if every finite index
subgroup of $G$ contains the subgroup $St_G(n)$ for some  $n$.
\end{defn}

If $G\le Aut(X^\ast)$ has the congruence subgroup
property then $\widehat{G}\cong \overline{G}$ where the latter denotes the closure of $G$ in $Aut(X^\ast)$ (see \cite[Theorem 9]{MR1765119} and  \cite[Lemma 1.1.9]{MR2599132}).

For $u\in X^\ast$  let us denote the subtree at $u$ by $uX^\ast$. The \textit{rigid stabilizer} of 
$G\le Aut(X^\ast)$  at $u$ is the subgroup $Rist_G(u)$ consisting of elements which act trivially outside of 
the sub-tree $uX^\ast$. The rigid stabilizer of level $n$ is the subgroup $Rist_G(n)=
\left< Rist_G(u) \mid |u|=n \right>= \prod_{|u|=n}Rist_G(u)$. 

\begin{defn}
A level transitive subgroup $G\le Aut(X^\ast)$ is called a branch group (resp. weakly branch group),
if for all $n$, the subgroup $Rist_G(n)$ has finite index in $G$
 (resp. is nontrivial).\end{defn}

The class  of branch groups plays an important role in the description of just-infinite groups and
contains many groups with unsual properties. We refer to \cite{MR1765119} for a detailed account on branch groups. We also note that many branch profinite groups have a universal embedding property
\cite{MR1754662}. The following gives criterion for a branch group to be just-infinite:

\begin{thm}\label{ji} \cite[Theorem 4]{MR1765119}
A branch group $G$ is just-infinite if and only if for each $u\in X^\ast$, $Rist_G(u)$ has finite
abelianization.
\end{thm}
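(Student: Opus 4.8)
The plan is to reduce just-infiniteness to a single structural fact about commutators of rigid stabilizers, and to run both implications through one lemma. Recall that a group is just-infinite precisely when every nontrivial normal subgroup has finite index, so I must control the normal subgroups of a branch group. The engine of the whole argument is the following claim, which uses only level-transitivity and nontriviality of rigid stabilizers (so it in fact holds for weakly branch $G$): if $1 \neq N \trianglelefteq G$, then $N$ contains $[Rist_G(n),Rist_G(n)] = \prod_{|u|=n}[Rist_G(u),Rist_G(u)]$ for some $n$.

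To prove the claim, pick $1 \neq g \in N$; since $\bigcap_n St(n)=1$, there is a vertex $v$ with $w := g(v) \neq v$, and as $v,w$ lie on the same level their subtrees $vX^\ast$ and $wX^\ast$ are disjoint. Moreover $g$ moves $v$-descendants at every deeper level, so I may take $|v|$ as large as I like. For $h \in Rist_G(v)$ the element $a := [h,g^{-1}] = h^{-1}\cdot ghg^{-1}$ lies in $N$ (as $g \in N \trianglelefteq G$), and its second factor $ghg^{-1}$ is supported on $wX^\ast$, hence commutes with everything supported on $vX^\ast$. A direct computation then shows that for every $h' \in Rist_G(v)$ one has $[a,h'] = [h^{-1},h'] \in N$; letting $h,h'$ range over $Rist_G(v)$ yields $[Rist_G(v),Rist_G(v)] \leq N$. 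Finally, level-transitivity conjugates $Rist_G(v)$ onto every $Rist_G(u)$ with $|u|=|v|$, and $N$ is normal, so $N$ contains the product over the whole level, proving the claim.

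With the claim in hand, the implication ``finite abelianizations $\Rightarrow$ just-infinite'' is immediate: given $1 \neq N \trianglelefteq G$, the claim puts $[Rist_G(n),Rist_G(n)] \leq N$ for some $n$, and since the branch hypothesis makes $Rist_G(n)$ of finite index in $G$ while $[Rist_G(n):[Rist_G(n),Rist_G(n)]] = \prod_{|u|=n}|Rist_G(u)^{\mathrm{ab}}|$ is a finite product of finite numbers, $N$ has finite index. For the converse, fix $u$, set $n=|u|$, and consider $M := [Rist_G(n),Rist_G(n)] = \prod_{|u|=n}[Rist_G(u),Rist_G(u)]$; level-transitivity permutes the factors, so $M \trianglelefteq G$. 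If $M \neq 1$, then just-infiniteness forces $[G:M]<\infty$, whence $Rist_G(n)/M \cong \prod_{|u|=n}Rist_G(u)^{\mathrm{ab}}$ is finite and each $Rist_G(u)$ has finite abelianization, as desired.

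The one remaining, and genuinely delicate, case is $M=1$, i.e.\ every $Rist_G(u)$ with $|u|=n$ is abelian; this is the step I expect to be the main obstacle, since here the commutator claim is vacuous and one must instead exploit the tree itself. I would argue this cannot occur for an infinite branch group. If $Rist_G(n)$ is abelian then, being of finite index, it makes $G$ virtually abelian, and each deeper $Rist_G(m) = \bigoplus_{|w|=m}Rist_G(w)$ ($m \geq n$) is an abelian finite-index subgroup splitting as an internal direct sum of $d^m$ mutually conjugate, hence isomorphic, nontrivial groups. In the finitely generated setting relevant to this paper this is at once contradictory: the torsion-free rank of a finite-index subgroup of the finitely generated abelian group $Rist_G(n)$ is a constant $k$, yet it also equals $d^m \cdot \mathrm{rank}(Rist_G(w))$, which is impossible once $d^m > k$ unless $Rist_G(w)$ is finite --- forcing the finite-index subgroup $Rist_G(m)$, and thus $G$, to be finite, contrary to $G$ being infinite. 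Hence $M \neq 1$ at every level, we are always in the previous case, and the theorem follows. (In full generality one invokes the same principle in the form that a branch group is never virtually abelian.)
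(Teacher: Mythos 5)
The paper does not actually prove this theorem---it is quoted verbatim from \cite[Theorem 4]{MR1765119}---so your proposal can only be measured against the original argument, and it essentially \emph{is} that argument. Your key claim, that every nontrivial $N\trianglelefteq G$ contains $[Rist_G(n),Rist_G(n)]=\prod_{|u|=n}[Rist_G(u),Rist_G(u)]$ for some $n$, is exactly the engine of Grigorchuk's proof, and your verification of it is correct: with $a=[h,g^{-1}]=h^{-1}\cdot ghg^{-1}$ and $ghg^{-1}$ supported on $wX^\ast$, disjoint from $vX^\ast$, the computation $[a,h']=[h^{-1},h']$ checks out, $[a,h']\in N$ by normality, and since $h^{-1}$ ranges over all of $Rist_G(v)$ you get the full derived subgroup; level transitivity and normality of $N$ then spread this over the level. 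Both deductions from the claim are also sound (for the converse, note that conjugation permutes the level-$n$ rigid stabilizers, so $M$ is indeed normal, as you say).

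The one soft spot is precisely where you flag it: the degenerate case $M=1$. Your rank argument disposes of it completely when $G$ is finitely generated---which covers every application in this paper, since the theorem is invoked only for the $2$-generated groups $L_\omega$---but the statement carries no finite-generation hypothesis, and in general you only \emph{assert} that a branch group is never virtually abelian. That assertion is true and standard, but since your proof leans on it, note that it follows in three lines from your own commutator trick, so no external input is needed: if $A\trianglelefteq G$ is abelian of finite index, pick $1\neq g\in A$; then $g$ moves some vertex and hence every vertex below it, so choose a moved vertex $v$ of any level $m$. Each $Rist_G(v)$ is infinite, being one of finitely many mutually conjugate direct factors of the finite-index (hence infinite) group $Rist_G(m)$, so $A\cap Rist_G(v)\neq 1$; taking $1\neq h\in A\cap Rist_G(v)$, the element $ghg^{-1}$ is nontrivial and supported on the disjoint subtree $g(v)X^\ast$, so $ghg^{-1}\neq h$, i.e.\ $[g,h]\neq 1$ inside the abelian group $A$, a contradiction. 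With that patch inserted, your proof is complete in full generality and coincides in structure with the proof in the cited source.
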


\subsection{Grigorchuk groups \footnote{
 The first author insists on this terminology which is standard.}} \mbox{}

\vspace{0.1cm}

We recall a construction of groups from \cite{MR764305}.

Throughout this section we fix $X=\{0,1\}$. Let $\Om=\{0,1,2\}^\mathbb{N}$ be the set of all infinite sequences over $\{0,1,2\}$ and let $\sigma: \Om \to \Om$ be the shift given by
 $\sigma(\om_1\om_2\ldots)=\om_2\om_3\ldots$. For each $\om \in \Om$ we will define the automorphisms
 $b_\om,c_\om,d_\om$ recursively as follows:

For  $v\in X^\ast$
$$\begin{array}{llllll}
 b_\om(0v)=& 0 \beta(\om_1)(v) &   c_\om(0v)=& 0 \zeta(\om_1)(v)
  &  d_\om(0v)= &0 \delta(\om_1)(v) \\

   b_\om(1v)=& 1 b_{\tau (\om)}(v) &      c_\om(1v)= & 1 c_{\sigma \om}(v)
   & d_\om(1v)= & 1 d_{\sigma \om}(v), \\

\end{array}
$$
where
$$
\begin{array}{ccc}
 \beta(0)=a & \beta(1)=a & \beta(2)=e \\
  \zeta(0)=a & \zeta(1)=e & \zeta(2)=a \\
   \delta(0)=e & \delta(1)=a & \delta(2)=a \\
\end{array}
$$
and $e$ denotes the identity.  Also define the following automorphism $a$:
$$a(0v)=1v \;\text{and} \; a(1v)=0v  $$
Note that from the definition, the following relations are immediate:
$
 a^2=b_\om^2=c_\om^2=d_\om^2=b_\om c_\om d_\om =e
$.

For $\om=\om_1\om_2\ldots \in \Om$, let  $G_\om$ be the subgroup of $Aut(X^\ast)$ generated by $a,b_\om,c_\om,d_\om$.
The isomorphism (\ref{wr})  restricts to an embedding $G_\om \to S_2 \ltimes  (G_{\sigma \om} \times G_{
\sigma \om})$ for which

$$\begin{array}{cccr}
a & \mapsto & (01) &(e,e) \\
b_\om  & \mapsto &  &(\beta(\om_1),b_{\sigma \om}) \\
c_\om  & \mapsto &  &(\zeta(\om_1),c_{\sigma \om}) \\
d_\om  & \mapsto &  &(\delta(\om_1),d_{\sigma \om}) \\
\end{array}
$$

The groups $\{G_\om, \om \in \Om\}$ have a plethora of interesting and unusual properties related
to various notions such as growth and  amenability. We will mention some properties which will be used 
in the proof of the main theorem.

 Let $\Om_\infty \subset \Om$ be the subset consisting of sequences
in which the symbols $0,1,2$ appear infinitely often. 

\begin{thm}
For $\om \in \Om_\infty$, $G_\om$ is a just-infinite 2-group which is branch  and has the congruence subgroup
property.
\end{thm}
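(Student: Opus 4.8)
The plan is to establish the four properties---being a $2$-group, just-infiniteness, branchness, and the congruence subgroup property---in a sequence where earlier properties feed into later ones. I would work throughout with the recursive embedding $G_\om \to S_2 \ltimes (G_{\sigma\om} \times G_{\sigma\om})$, exploiting that $\sigma\om \in \Om_\infty$ whenever $\om \in \Om_\infty$, so that every statement proved for all of $\Om_\infty$ may be applied to the shifted sections. First I would check level transitivity: since $a$ swaps the two top-level subtrees and each of $b_\om, c_\om, d_\om$ projects (for a suitable first symbol, which occurs because all three symbols recur) to generators acting transitively one level down, an induction on levels shows $G_\om$ is level transitive.

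Next I would handle the torsion. The relations $a^2 = b_\om^2 = c_\om^2 = d_\om^2 = b_\om c_\om d_\om = e$ are immediate from the definition, so the generators have order dividing $2$. To show every element has $2$-power order I would argue by induction on word length using the embedding: an element $g \in St_{G_\om}(1)$ maps to a pair $(g_0, g_1) \in G_{\sigma\om} \times G_{\sigma\om}$ whose sections are \emph{shorter} words in the generators of $G_{\sigma\om}$ (a length-contraction estimate, as in the classical argument for the first Grigorchuk group), so by induction each $g_i$ has $2$-power order; an element outside $St_{G_\om}(1)$ has its square inside it, and one reduces to the previous case. This gives that $G_\om$ is a $2$-group.

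For branchness I would exhibit that the rigid stabilizers have finite index. The key computation is to show, using the recursion and the torsion relations, that a fixed commutator-type element (for instance an appropriate conjugate of $[a, b_\om]$ or the element $(ab_\om)$ built so that one of its sections is trivial) lies in $Rist_{G_\om}(u)$ for a top-level vertex $u$, and that such elements together with level transitivity generate a subgroup of $Rist_{G_\om}(1)$ of finite index in $G_\om$. Concretely I expect to show $Rist_{G_\om}(1) \supseteq K \times K$ for a suitable finite-index subgroup $K$ of $G_{\sigma\om}$, and then iterate; combined with level transitivity (which conjugates $Rist_{G_\om}(u)$ across a level), this yields finite index of $Rist_{G_\om}(n)$ for all $n$, so $G_\om$ is branch. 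Just-infiniteness then follows from Theorem~\ref{ji}: since $G_\om$ is a finitely generated infinite $2$-group, each $Rist_{G_\om}(u)$ is finitely generated and, being a subgroup of a $2$-group whose abelianization would otherwise produce an infinite proper quotient, has finite abelianization---the criterion of Theorem~\ref{ji} then gives just-infiniteness.

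The hardest step, and the one I would expect to absorb most of the work, is the branch property, specifically the length-contraction/finite-index estimate showing $Rist_{G_\om}(1)$ contains a product $K \times K$ of finite index. The subtlety over the classical case is that the section data depend on $\om$ through the maps $\beta, \zeta, \delta$, so the identities one uses must hold \emph{uniformly} for all $\om \in \Om_\infty$; the hypothesis that each of $0,1,2$ occurs infinitely often is exactly what guarantees that, no matter the current symbol $\om_1$, one can choose among $b_\om, c_\om, d_\om$ a generator whose first-level section is the nontrivial element $a$ and another whose section is trivial, which is what makes the rigid-stabilizer generation argument go through. Once this uniform computation is in place, the congruence subgroup property follows from the same analysis---showing $St_{G_\om}(n)$ contains a fixed finite-index subgroup forces every finite-index subgroup to contain some level stabilizer---and all four conclusions are assembled.
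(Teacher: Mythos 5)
The paper does not actually prove this theorem: it is handled entirely by citation (periodicity and just-infiniteness to \cite{MR764305}, branchness to \cite{perv_pre,BGN}, the congruence subgroup property to \cite{MR1841763}). Your attempt to reprove everything from scratch is therefore a different route by necessity, and its overall architecture (level transitivity, torsion by induction through sections, branchness via $K\times K\le Rist_{G_\om}(1)$, just-infiniteness via Theorem \ref{ji} plus periodicity) is the classical one. The just-infiniteness step in particular is sound and is exactly how the paper itself argues later for $L_\om$: $Rist(n)$ has finite index, hence each $Rist_G(u)$ is finitely generated, and a finitely generated torsion abelian group is finite. But two of your steps have genuine gaps. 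First, the torsion argument: naive length contraction fails. For $g=ab_\om$ one has $g^2\mapsto(b_{\sigma\om}a,\,ab_{\sigma\om})$, so the sections of $g^2$ have the \emph{same} length as $g$, and iterating gives $\mathrm{ord}(ab_\om)=2\,\mathrm{ord}(ab_{\sigma\om})$ with no progress until a symbol $2$ occurs in $\om$ (making $\beta(\om_i)=e$ and killing a letter). Your sketch never uses $\Om_\infty$ in this induction, so as stated it would ``prove'' that every $G_\om$ is torsion --- which is false: for $\om=000\ldots$ one gets $d_\om=e$ and $G_\om$ is infinite dihedral, with $ab_\om$ of infinite order. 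The correct induction is on a secondary parameter (the distance to the next occurrence of the symbol that trivializes the relevant letter), and $\Om_\infty$ is what makes that parameter finite at every stage. Relatedly, your stated role for $\Om_\infty$ in the branch step --- availability of a generator with first-level section $a$ and another with trivial section --- is misplaced: for \emph{every} symbol $\om_1$, two of $\beta(\om_1),\zeta(\om_1),\delta(\om_1)$ equal $a$ and one equals $e$, so such generators exist for all $\om$, including the degenerate non-branch ones; the hypothesis must do its work elsewhere.

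Second, the congruence subgroup property does not ``follow from the same analysis.'' Branchness (even just-infinite branchness) does not imply CSP --- there are branch groups without it, so some computation specific to $G_\om$ is unavoidable. The standard proof has two parts: in a just-infinite branch group every finite-index subgroup contains $Rist_G(n)'$ for some $n$ (this uses the normal-subgroup structure of branch groups), and then one must prove the substantive containment that $Rist_{G_\om}(n)'$ (or a similar verbal subgroup) contains a level stabilizer $St_{G_\om}(m)$ for suitable $m$. Your formulation --- ``showing $St_{G_\om}(n)$ contains a fixed finite-index subgroup forces every finite-index subgroup to contain some level stabilizer'' --- is inverted ($St(n)$ having finite index is automatic and says nothing about CSP) and skips precisely the hard containment, which is the content of the reference \cite{MR1841763} the paper cites. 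So the proposal is a reasonable roadmap of the classical arguments, but the torsion step would fail as written and the CSP step is asserted rather than proved.
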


\begin{proof}
The facts that $G_\om$  is a $2$-group and is just infinite are established in 
\cite{MR764305}. The branch property is also implicitly proven and used in \cite{MR764305} (this property was not
defined at time of \cite{MR764305}). Explicit proofs can be found in \cite{perv_pre} or in 
\cite{BGN}. The congruence subgroup property is proven in \cite{MR1841763}.

\end{proof}

It was shown already in \cite{MR764305} that the family $\{G_\om \mid \om \in \Om\}$ contains
$2^{\aleph_0}$ non-isomorphic groups. The  complete  solution  of  isomorphism  problem  for  the 
groups  $G_\om$  was given by V. Nekrashevych \cite{MR2162164} based on the results of \cite{MR1890957}.

\begin{thm}\cite[Theorem 2.10.13]{MR2162164}
For $\om,\eta\in \Om_\infty$, $G_\om$ is isomorphic to $G_\eta$ if and only if $\omega$ 
is obtained from $\eta$ by an application of a permutation $\pi \in Sym\{0,1,2\}$ coordinate wise. 
\end{thm}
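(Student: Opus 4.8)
The plan is to treat the two implications separately, the reverse one by direct verification and the forward one via the rigidity theory of Lavreniuk and Nekrashevych. For the reverse implication, suppose $\eta=\pi\om$ coordinatewise for some $\pi\in Sym\{0,1,2\}$. The starting observation is that for each symbol $s\in\{0,1,2\}$ exactly one of $\beta(s),\zeta(s),\delta(s)$ equals $e$, namely $\delta(0)=\zeta(1)=\beta(2)=e$, the other two being $a$. Hence $s\mapsto$ (the unique element of $\{b,c,d\}$ whose decoration is trivial at $s$) is a bijection $\{0,1,2\}\to\{d,c,b\}$, and through it $\pi$ determines a permutation $\rho$ of $\{b,c,d\}$. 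A short check shows that the decoration functions transform equivariantly, i.e. $\mathrm{dec}_{\rho(x)}(\pi(s))=\mathrm{dec}_x(s)$ for all $x\in\{b,c,d\}$ and $s\in\{0,1,2\}$ (writing $\mathrm{dec}_b=\beta$, $\mathrm{dec}_c=\zeta$, $\mathrm{dec}_d=\delta$). Since coordinatewise application of $\pi$ commutes with the shift, $\pi(\sigma\om)=\sigma(\pi\om)$, I would define $\phi\colon G_\om\to G_{\pi\om}$ by $a\mapsto a$ and $x_\om\mapsto \rho(x)_{\pi\om}$ for $x\in\{b,c,d\}$, and verify by induction on the level — using the embedding $G_\om\to S_2\ltimes(G_{\sigma\om}\times G_{\sigma\om})$ and the equivariance identity — that $\phi$ respects the wreath recursion. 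This makes $\phi$ a well-defined isomorphism.

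For the forward implication, the essential input is that isomorphisms between these groups are geometric. Since $\om,\eta\in\Om_\infty$, the groups $G_\om$ and $G_\eta$ are level-transitive branch groups, and so fall within the scope of the rigidity results of \cite{MR1890957}: any abstract isomorphism $G_\om\to G_\eta$ is \emph{spatial}, i.e. induced by conjugation by a tree automorphism $g\in Aut(X^\ast)$ with $gG_\om g^{-1}=G_\eta$. I would use this to replace the abstract isomorphism by such a $g$ and then analyze $g$ directly.

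The final step is to read the permutation off $g$. The root-active generator $a$ lies outside the first-level stabilizer $St_{G_\om}(1)$ while $b,c,d$ lie inside it, so these two kinds of generator occupy structurally distinct positions that $g$ must respect; this constrains the root permutation of $g$ and its sections $g_0,g_1$. Descending recursively through the sections of $g$ then produces, at each stage, induced spatial isomorphisms between the corresponding section groups of $G_\om$ and $G_\eta$ — which for these groups are again of the form $G_{\sigma^k\om}$, resp. $G_{\sigma^l\eta}$ — together with a local permutation of the alphabet $\{0,1,2\}$. The crux of the theorem, and the step I expect to be the main obstacle, is to rule out any shift of the defining sequence (so that $k=l$ at each stage) and to force all the local permutations to agree with one global $\pi$, yielding $\eta=\pi\om$ coordinatewise. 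This rigidity is precisely what the hypothesis $\om,\eta\in\Om_\infty$, which guarantees the branch and just-infinite structure, is there to supply, through the tight control on $g$ afforded by \cite{MR1890957,MR2162164}.
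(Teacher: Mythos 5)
Your reverse implication is correct and essentially complete: the equivariance identity $\mathrm{dec}_{\rho(x)}(\pi(s))=\mathrm{dec}_x(s)$ in fact yields $\rho(x)_{\pi\omega}=x_{\omega}$ as tree automorphisms (by uniqueness of the recursive definition), so $G_{\pi\omega}=G_{\omega}$ as subgroups of $Aut(X^\ast)$ and no induction on relations is even needed; this is exactly how the present paper handles the easy direction of its own theorem for $L_\omega$, where $\pi=(12)$ gives $d_\omega=d_\eta$ and $b_\omega=c_\eta$. The forward implication, however, has a genuine gap at the point where you invoke rigidity. The results of \cite{MR1890957} and \cite[Proposition 2.10.7]{MR2162164} (quoted as the Proposition in this paper) do not apply to an arbitrary abstract isomorphism: they apply to \emph{saturated} isomorphisms in the sense of Definition \ref{d1}, and the real technical content is to exhibit a filtration $H_n\le St(n)$, level transitive on all level-$n$ subtrees, which is carried onto the corresponding filtration of the target by \emph{every} isomorphism. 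This is why one uses verbal subgroups — iterated squares — exactly as the paper does with $L_{n,\omega}$ in Lemmas \ref{five} and \ref{six}. Writing that the groups ``fall within the scope'' of the rigidity results skips the step that makes the rigidity applicable, so spatiality of $\phi$ is unproven as you state it.

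Second, and more seriously, you leave the decisive step — extracting $\eta=\pi\omega$ from the conjugacy $gG_\omega g^{-1}=G_\eta$ — as an acknowledged obstacle rather than an argument. (Your worry about shifts, i.e.\ $k\neq l$, is in fact vacuous: a tree automorphism preserves levels, so sections at level-$n$ vertices of $G_\omega$ are matched with level-$n$ data of $G_\eta$ and $k=l=n$ automatically; the genuine difficulty is forcing a single global $\pi$.) Note that this paper does not prove the cited theorem, but its proof of the main theorem exhibits the mechanism that closes precisely this step, and it adapts verbatim to $G_\omega$: the continuous homomorphism $p:Aut(X^\ast)\to\mathbb{Z}_2^{\mathbb{N}}$ recording, for each level, the parity of the number of active vertices. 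Since $p$ is a homomorphism into an abelian group, it is conjugation invariant, so conjugacy of the closures gives $p(G_\omega)=p(G_\eta)$; computing $p$ on generators (the sequences $p(b_\omega),p(c_\omega),p(d_\omega)$ are the indicator sequences of $\omega_i\neq 2$, $\omega_i\neq 1$, $\omega_i\neq 0$) recovers $\omega$ from the unordered set $\{p(b_\omega),p(c_\omega),p(d_\omega)\}$ exactly up to a permutation in $Sym\{0,1,2\}$. Without this invariant, or the recursive portrait analysis actually carried out rather than sketched, your proposal stops short of the theorem.
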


The ideas in the proof of this theorem will be  used  by  us  as  well.

\section{Proof of the main theorem}

Let $L_\om=\left<ab_\om, d_\om \right> \le G_\om$. It follows from the relations 
$
 a^2=b_\om^2=c_\om^2=d_\om^2=b_\om c_\om d_\om =e
$
that $L_\om$ is a normal subgroup of index $2$ in $G_\om$.

\begin{lem}
 Let $\om \in \Om, u \in X^\ast$ with $|u|=n$. Then, for all $g \in L_{\sigma^n \om}$
 there exists $h\in St_{L_\om}(u)$ such that $h_u=g$.

\end{lem}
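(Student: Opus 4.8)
The plan is to prove the statement by induction on $n=|u|$, reducing everything to the first level $n=1$. For $n=0$ the vertex $u$ is the empty word, $St_{L_\om}(u)=L_\om$, $h_u=h$, and $\sigma^0\om=\om$, so one takes $h=g$. For the inductive step write $u=iu'$ with $i\in X$ and $|u'|=n$, and note $\sigma^{n+1}\om=\sigma^{n}(\sigma\om)$. Given $g\in L_{\sigma^{n+1}\om}$, the inductive hypothesis applied to the parameter $\sigma\om$ and the vertex $u'$ produces $k\in St_{L_{\sigma\om}}(u')$ with $k_{u'}=g$. The first-level case applied to the parameter $\om$ and the vertex $i$ then produces $h\in St_{L_\om}(i)$ with $h_i=k$. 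Since $h$ fixes $i$ and $h_i=k$ fixes $u'$, the element $h$ fixes $iu'=u$, and by the composition rule for sections $h_u=(h_i)_{u'}=k_{u'}=g$. Thus the whole lemma follows once the case $n=1$ is established for every $\om\in\Om$.

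For the base case fix $\om$ and a vertex $i\in\{0,1\}$. Because $ab_\om$ induces the nontrivial permutation of the first level while $d_\om$ fixes it, $L_\om$ surjects onto $S_2$, and hence $St_{L_\om}(i)$ equals the first level stabilizer $H:=St_{L_\om}(0)=St_{L_\om}(1)$, independent of $i$. The key point is that on $St_{L_\om}(i)$ the section map $h\mapsto h_i$ is a group homomorphism, since $(hh')_i=h_{\pi_{h'}(i)}h'_i=h_ih'_i$ whenever $\pi_{h'}(i)=i$. Consequently its image is a subgroup of $G_{\sigma\om}$, and to prove that this image contains $L_{\sigma\om}=\langle ab_{\sigma\om},d_{\sigma\om}\rangle$ it suffices to exhibit finitely many elements of $H$ whose $i$-sections generate a subgroup containing $L_{\sigma\om}$. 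I will use the three elements $d_\om$, $(ab_\om)^2$ and $(ab_\om)d_\om(ab_\om)^{-1}$, all of which lie in $L_\om$ and induce the trivial permutation on the first level, hence belong to $H$.

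It remains to compute the relevant sections and check the inclusion in each of the three cases $\om_1\in\{0,1,2\}$. From the wreath recursion $ab_\om\mapsto((01);\beta(\om_1),b_{\sigma\om})$ and $d_\om\mapsto(e;\delta(\om_1),d_{\sigma\om})$ one gets, using $(gh)_i=g_{\pi_h(i)}h_i$, the sections $(d_\om)_0=\delta(\om_1)$, $((ab_\om)^2)_0=b_{\sigma\om}\beta(\om_1)$ and $((ab_\om)d_\om(ab_\om)^{-1})_0=b_{\sigma\om}d_{\sigma\om}b_{\sigma\om}$, together with the analogous expressions at the vertex $1$. Here the decisive simplification is that $b_{\sigma\om},c_{\sigma\om},d_{\sigma\om}$ are commuting involutions (they satisfy $b_{\sigma\om}c_{\sigma\om}d_{\sigma\om}=e$), so that $b_{\sigma\om}d_{\sigma\om}b_{\sigma\om}=d_{\sigma\om}$ and $d_{\sigma\om}$ is always realized as a section. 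Recalling $\beta(0)=\beta(1)=a$, $\beta(2)=e$, $\delta(0)=e$, $\delta(1)=\delta(2)=a$, and that $a,b_{\sigma\om},d_{\sigma\om}$ are involutions, a short case check shows that the subgroup generated by these sections contains both $ab_{\sigma\om}$ and $d_{\sigma\om}$ in every case (for instance when $\om_1=0$ the sections at $0$ are $e$, $b_{\sigma\om}a=(ab_{\sigma\om})^{-1}$ and $d_{\sigma\om}$), and therefore contains $L_{\sigma\om}$. The only real obstacle is bookkeeping: getting the wreath-product composition conventions and the inverse and conjugate sections exactly right, and handling the vertices $0$ and $1$ separately, since the sections at the two vertices differ. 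Once the commutation $b_{\sigma\om}d_{\sigma\om}=d_{\sigma\om}b_{\sigma\om}$ is exploited, the case analysis is routine.
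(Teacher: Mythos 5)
Your proposal is correct and takes essentially the same approach as the paper: induction on $|u|$ reduced to the base case $n=1$, which is settled by exhibiting explicit elements of $L_\om$ stabilizing the first level whose sections realize the generators $ab_{\sigma\om}$ and $d_{\sigma\om}$ (your computations check out in all six subcases). The only cosmetic differences are that you peel off the first letter of $u$ rather than the last in the inductive step, and you use the uniform triple $d_\om$, $(ab_\om)^2$, $(ab_\om)d_\om(ab_\om)^{-1}$ (exploiting $b_{\sigma\om}d_{\sigma\om}b_{\sigma\om}=d_{\sigma\om}$) in place of the paper's case-specific element lists.
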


\begin{proof}
Induction on $n$. 

Suppose $n=1$, then if $\om$ starts with $0$ we have 
$$\begin{array}{ccc}
(ab_\om)^2 & \mapsto  & (b_{\sigma \om}a, a b_{\sigma \om}) \\

(b_\om a)^2 & \mapsto  & (ab_{\sigma \om},  b_{\sigma \om}a) \\

d_\om & \mapsto  & (1 ,  d_{\sigma \om}) \\

ad_\om a & \mapsto  & (  d_{\sigma \om},1) \\
\end{array}$$
If $\om$ starts with 1 we have
$$\begin{array}{ccc}
(ab_\om)^2 & \mapsto  & (b_{\sigma \om}a, a b_{\sigma \om}) \\
(b_\om a)^2 & \mapsto  & (ab_{\sigma \om},  b_{\sigma \om}a) \\
d_\om & \mapsto  & (a ,  d_{\sigma \om}) \\
ad_\om a & \mapsto  & (  d_{\sigma \om},a) \\
\end{array}$$
and if $\om$ starts with 2 we have
$$\begin{array}{ccc}
(ab_\om)^2 & \mapsto  & (b_{\sigma \om},  b_{\sigma \om}) \\
d_\om & \mapsto  & (a ,  d_{\sigma \om}) \\
ad_\om a & \mapsto  & (  d_{\sigma \om},a) \\
d_\om (ab_\om)^2 & \mapsto & (a b_{\sigma \om} , c_{\sigma \om}) \\
ad_\om a (ab_\om)^2 & \mapsto & ( c_{\sigma \om} , a b_{\sigma \om} )
\end{array}$$
which show that the claim is true for $n=1$.
Assume now $n>1$ and let $u=vx$ for $x\in X$. By induction assumption, there is 
$k \in St_{L_{\sigma^{n-1}\om}}(x)$ such that $k_x=g$ and again by induction assumption
there is $h \in St_{L_\om}(v)$ such that $h_v=k$. Then clearly $h \in St_{L_\om}(u)$ and 
$h_u=h_{vx}=k_x=g$

\end{proof}

\begin{lem}
 The action of $L_\om$ is level transitive.
\end{lem}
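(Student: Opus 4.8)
The plan is to prove by induction on $n$ that for every $\om \in \Om$ the group $L_\om$ acts transitively on level $n$. For the base case $n=1$, I would observe that the generator $ab_\om$ has the form $((01);\ast,\ast)$ under the embedding (\ref{wr}): indeed $a \mapsto ((01);e,e)$ carries the nontrivial root permutation, while $b_\om$ fixes the root, so their product swaps the two level-$1$ vertices $0$ and $1$. Hence $L_\om$ is transitive on level $1$. (Note it is essential to use $ab_\om$ here rather than $d_\om$, which maps to $(\delta(\om_1),d_{\sigma\om})$ and fixes level $1$ pointwise as a root permutation.)

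For the inductive step, assume that $L_\eta$ acts transitively on level $n$ for \emph{every} $\eta \in \Om$, and fix $\om$. It suffices to show that every vertex $u$ of length $n+1$ lies in the $L_\om$-orbit of the vertex $0^{n+1}$. Writing $u = x u_1$ with $x \in X$ and $|u_1| = n$, I would first invoke transitivity on level $1$ to choose $t \in L_\om$ with $t(x) = 0$; then $t(u) = 0\,t_x(u_1)$ is a vertex of the form $0v$ with $|v| = n$. This reduces the problem to moving $0v$ to $0^{n+1}$ inside the subtree rooted at $0$.

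To accomplish this reduction I would apply the preceding lemma with the single-letter vertex $u = 0$ (the case $n=1$): for every $g \in L_{\sigma\om}$ there exists $h \in St_{L_\om}(0)$ with $h_0 = g$. By the induction hypothesis $L_{\sigma\om}$ is transitive on level $n$, so I can pick $g \in L_{\sigma\om}$ with $g(v) = 0^n$ and take the corresponding $h \in St_{L_\om}(0)$. Then $h(0v) = 0\,h_0(v) = 0\,g(v) = 0^{n+1}$. Chaining the two moves shows that $u$ and $0^{n+1}$ lie in the same $L_\om$-orbit, which completes the induction and hence the proof.

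I expect there to be no genuine obstacle here; the only point requiring care is the bookkeeping of the induction. The crucial observation is that the section-realizing elements must be produced for the \emph{shifted} sequence $\sigma\om$, and this is exactly what the previous lemma supplies — it does all the work of manufacturing vertex-stabilizer elements with prescribed sections, so that the level-transitivity argument becomes a routine two-step reduction (first fix the leading coordinate via level-$1$ transitivity, then recurse into the subtree) with no additional computation.
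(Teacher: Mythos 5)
Your proof is correct and follows essentially the same route as the paper: induction on the level, with the base case handled by the root transposition $ab_\om$ and the inductive step by lifting transitivity of $L_{\sigma\om}$ into the subtrees via the section-realization lemma. The only cosmetic difference is that you normalize every vertex to the fixed target $0^{n+1}$ and conclude by an orbit argument, whereas the paper moves an arbitrary $xu$ to an arbitrary $yv$ directly, splitting into the cases $x=y$ and $x\neq y$ — the same ingredients with slightly different bookkeeping.
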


\begin{proof}
Let $u,v \in X^\ast$ be of length $n$. By induction on $n$, if $n=1$, then $ab_{ \om}(u)=v$.
Suppose $n>1$ and let $xu,yv \in X^\ast$ be of length $n$. If $x=y$, 
 let $g\in L_{\sigma \om}$ such that $g(u)=v$ (such $g$ exists by the induction assumption). By the previous Lemma, there is  
$h \in St_{L_\om}(x)$ such that $h_x=g$. Then $h(xu)=h(x) h_x(u)=x g(u)=xv$. If $x\neq y$,
let $g\in L_{\sigma \om}$ such that $g\left(  (ab_\om)_x(u)\right)=v$. Again by the previous Lemma, 
there is
$h \in St_{L_\om}(y)$ such that $h_y=g$. Then $(h ab_\om)(xu)=h(y (ab_\om)_x(u))=
y g((ab_\om)_x(u))=yv$.
\end{proof}

\begin{lem}
 
 If  $\om \in \Om_\infty$ then $L_\om$ is a branch group.

\end{lem}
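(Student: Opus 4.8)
The plan is to derive the branch property of $L_\om$ from the already-established branch property of $G_\om$, exploiting that $L_\om$ sits inside $G_\om$ as a normal subgroup of index $2$, together with the level-transitivity proved in the previous lemma. For a level-transitive subgroup, being branch means precisely that $Rist_{L_\om}(n)$ has finite index in $L_\om$ for every $n$; since level-transitivity is already in hand, the whole problem reduces to an index estimate, and I would not need the section-realization of the first lemma directly (that lemma enters only through level-transitivity).

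First I would record the vertex-wise identity $Rist_{L_\om}(u) = L_\om \cap Rist_{G_\om}(u)$, valid for every $u\in X^\ast$: an element of $L_\om$ acts trivially off the subtree $uX^\ast$ if and only if, viewed inside $G_\om$, it lies in $Rist_{G_\om}(u)$. Applying the elementary inequality $[H : H\cap L]\le [G:L]$ with $H=Rist_{G_\om}(u)$, $L=L_\om$, $G=G_\om$ (the injection $h(H\cap L)\mapsto hL$ of coset spaces), it follows that $Rist_{L_\om}(u)$ has index at most $2$ in $Rist_{G_\om}(u)$.

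Next I would pass from individual vertices to an entire level. Because distinct vertices of level $n$ span disjoint subtrees, the rigid level stabilizers are internal direct products, $Rist_{G_\om}(n)=\prod_{|u|=n}Rist_{G_\om}(u)$ and $Rist_{L_\om}(n)=\prod_{|u|=n}Rist_{L_\om}(u)$, with $Rist_{L_\om}(u)\le Rist_{G_\om}(u)$ for each $u$. Multiplying the factor-wise indices then yields $[Rist_{G_\om}(n):Rist_{L_\om}(n)]=\prod_{|u|=n}[Rist_{G_\om}(u):Rist_{L_\om}(u)]\le 2^{2^n}<\infty$.

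Finally I would assemble the pieces. Since $\om\in\Om_\infty$, the group $G_\om$ is branch, so $Rist_{G_\om}(n)$ has finite index in $G_\om$; combined with the previous step and the inclusion $Rist_{L_\om}(n)\le Rist_{G_\om}(n)$, the subgroup $Rist_{L_\om}(n)$ has finite index in $G_\om$, and therefore also in $L_\om$. As this holds for every $n$ and $L_\om$ is level-transitive, $L_\om$ is a branch group. The one point that genuinely needs care is checking that passing to the index-$2$ subgroup does not shrink the rigid stabilizers to infinite index; this is exactly what the uniform bound $[H:H\cap L]\le[G:L]$ rules out, and it is what makes level-transitivity, rather than any finer self-similar structure, the substantive input.
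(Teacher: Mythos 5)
Your proposal is correct and follows essentially the same route as the paper: both arguments use $Rist_{L_\om}(u)=L_\om\cap Rist_{G_\om}(u)$ to get finite (here, index at most $2$) vertex-wise index, pass to the level rigid stabilizers via the direct product decomposition, and conclude from $G_\om$ being branch that $Rist_{L_\om}(n)$ has finite index in $L_\om$. Your explicit bound $2^{2^n}$ and the remark isolating level-transitivity as the remaining hypothesis are harmless refinements of the paper's argument, not a different method.
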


\begin{proof}
 
 Let $u \in X^\ast$ with $|u|=n$.
 Since $L_\om$ has finite index in
 $G_\om$, $Rist_{L_\om}(u)=Rist_{G_\om}(u) \cap L_\om$ has finite index in $Rist_{G_\om}(u)$. Therefore
 $Rist_{L_\om}(n)=\prod_{|u|=n} Rist_{L_\om}(u)$ has finite index in $Rist_{G_\om}(n)=\prod_{|u|=n} Rist_{G_\om}(u)$. Since $G_\om$ is branch, $Rist_{G_\om}(n)$ has finite index in $G_\om$ and hence
 $Rist_{L_\om}(n)$ has finite index in $L_\om$.
\end{proof}

\begin{lem}
  If  $\om \in \Om_\infty$,  $L_\om$ is just-infinite and has the congruence subgroup property.

\end{lem}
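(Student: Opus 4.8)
The statement splits into two independent assertions, and I would treat them separately. The congruence subgroup property I expect to be essentially automatic from that of $G_\om$ together with $[G_\om:L_\om]=2$: given any finite-index subgroup $K\le L_\om$, it also has finite index in $G_\om$, so by the congruence subgroup property of $G_\om$ there is an $n$ with $St_{G_\om}(n)\le K$; since $K\le L_\om$ this forces $St_{G_\om}(n)\le L_\om$, whence $St_{L_\om}(n)=L_\om\cap St_{G_\om}(n)=St_{G_\om}(n)\le K$. Thus every finite-index subgroup of $L_\om$ contains a level stabilizer $St_{L_\om}(n)$, which is exactly the congruence subgroup property for $L_\om$. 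This argument uses nothing beyond finite index, so it applies verbatim to any finite-index subgroup of a group with the congruence subgroup property.

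For just-infiniteness I would exploit two facts already in hand: by the previous lemmas $L_\om$ is a level-transitive branch group, hence weakly branch, and a weakly branch group has trivial centralizer for each of its nontrivial normal subgroups (a standard property of weakly branch groups, see \cite{MR1765119}); moreover $L_\om\trianglelefteq G_\om$ with $G_\om$ just-infinite. Let $1\ne N\trianglelefteq L_\om$ and fix $a\in G_\om\setminus L_\om$, an involution with $G_\om=\gp{L_\om,a}$. Since $L_\om$ is normal in $G_\om$, the conjugate $N^a$ is again a nontrivial normal subgroup of $L_\om$. The plan is to show $N\cap N^a\ne 1$: if instead $N\cap N^a=1$, then $[N,N^a]\le N\cap N^a=1$, so $N^a\le C_{L_\om}(N)$; but the centralizer of the nontrivial normal subgroup $N$ is trivial, forcing $N^a=1$, a contradiction. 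Hence $N\cap N^a\ne 1$.

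Next I would observe that $N\cap N^a$ is normalized by $L_\om$ and, because $a^2=e$ interchanges $N$ and $N^a$, also by $a$; therefore $N\cap N^a$ is a nontrivial normal subgroup of $G_\om=\gp{L_\om,a}$. Since $G_\om$ is just-infinite, $N\cap N^a$ has finite index in $G_\om$, and a fortiori in $L_\om$; as $N\supseteq N\cap N^a$, the subgroup $N$ itself has finite index in $L_\om$. This shows every nontrivial normal subgroup of $L_\om$ has finite index, i.e. $L_\om$ is just-infinite.

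The main obstacle is the just-infinite half, and specifically ruling out the possibility that $N$ is disjoint from its conjugate $N^a$: a naive attempt to inherit just-infiniteness directly from $G_\om$ fails because a finite-index (even index-two normal) subgroup of a just-infinite group need not be just-infinite, as $\bZ\le D_\infty$ shows. What rescues the argument is the weakly branch structure of $L_\om$ itself, through the triviality of centralizers of nontrivial normal subgroups; this is the one external ingredient I would want to state and cite carefully. An alternative would be to invoke the branch criterion of Theorem \ref{ji} and verify that each $Rist_{L_\om}(u)$ has finite abelianization, but since $Rist_{L_\om}(u)$ is only an index-$\le 2$ normal subgroup of $Rist_{G_\om}(u)$, finite abelianization does not transfer for free (again the $D_\infty$ obstruction), so I would favor the centralizer route above.
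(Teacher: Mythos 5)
Your proof is correct, but the just-infinite half takes a genuinely different route from the paper's, which is only three lines long. For the congruence subgroup property the paper simply asserts inheritance to the finite-index subgroup $L_\om$; your spelled-out argument ($St_{G_\om}(n)\le K\le L_\om$ forces $St_{L_\om}(n)=L_\om\cap St_{G_\om}(n)=St_{G_\om}(n)\le K$) is exactly the argument left implicit, so that half matches. For just-infiniteness the paper does take the Theorem \ref{ji} route you rejected, and your objection to it overlooks a shortcut: one does not transfer finite abelianization from $Rist_{G_\om}(u)$ down to $Rist_{L_\om}(u)$ (there your $D_\infty$ worry would indeed be fatal); instead, since $L_\om$ is branch, $Rist_{L_\om}(n)$ has finite index in the finitely generated group $L_\om$, so each $Rist_{L_\om}(u)$, being a direct factor of $Rist_{L_\om}(n)$, is finitely generated, and since $L_\om$ is periodic its abelianization is a finitely generated torsion abelian group, hence finite --- this is the content of the paper's ``clearly this is satisfied since it is a periodic (torsion) group.'' Your alternative argument is nevertheless valid: the triviality of $C_{L_\om}(N)$ for $1\neq N\trianglelefteq L_\om$ is a true and standard property of weakly branch groups (it belongs to the rigidity circle of ideas of \cite{MR1890957} and \cite[Section 2.10]{MR2162164}, which is where I would cite it rather than \cite{MR1765119}), and with it your steps go through: $N\cap N^a=1$ would give $[N,N^a]=1$, hence $N^a\le C_{L_\om}(N)=1$, a contradiction, so $N\cap N^a$ is a nontrivial $a$-invariant normal subgroup of $L_\om$, hence normal of finite index in the just-infinite group $G_\om$. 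What your route buys is generality: replacing $N\cap N^a$ by the finite intersection $\bigcap_{g\in G_\om}N^g$ and iterating the same centralizer step shows that any finite-index normal subgroup of a just-infinite group that is itself weakly branch is just-infinite, with no periodicity hypothesis at all; what it costs is the external centralizer lemma, which is less elementary than the paper's torsion shortcut.
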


\begin{proof}
$G_\om$ has the congruence subgroup property and
hence $L_\om$ (having finite index in $G_\om$) has this property.
As mentioned in Theorem \ref{ji}, a branch group $G$ is just infinite if and only if each $Rist_G(u)$ has finite abelianization and clearly this is satisfied in $L_\om$ since
it is a periodic (torsion) group.
\end{proof}

It follows that the profinite completions $\widehat{L_\om}$ are isomorphic to the closures 
$\bar{L}_\om$ in $Aut(X^\star)$  and by  \cite[Theorem 2 and Corollary on Page 150]{MR1765119}, the groups $\widehat{L_\om}$
are just-infinite branch profinite (in fact pro-2) groups.
We will show that given $\om \in \Om_\infty,$ there is only one $\eta \in \Om_\infty\setminus \{\om\}$    such that $\widehat{L_\om}\cong \widehat{L_\eta}$, by using 
rigidity results and ideas of  \cite{MR1890957,MR2162164}.

\begin{defn}  \label{d1}
Let $G_1,G_2\le Aut(X^\ast)$ be level transitive. An isomorphism $\phi: G_1 \to G_2$ is called
saturated if there exists a sequence of subgroups $H_n \le G_1$ such that:

\begin{enumerate}
\item $H_n \le St_{G_1}(n)$ and $\phi(H_n)\le St_{G_2}(n)$,

\item for all $v \in X^\ast$ with $|v|=n$, $H_n$ and $\phi(H_n)$ act level transitively on the sub-tree
$vX^\ast$.
\end{enumerate}

\end{defn}

\begin{prop} \cite[Proposition 2.10.7]{MR2162164} Let $G_1,G_2\le Aut(X^\ast)$ be weakly branch groups and let $\phi: G_1 \to G_2$ be a saturated isomorphism. Then $\phi$ is induced by an automorphism of the
tree $X^\ast$.

\end{prop}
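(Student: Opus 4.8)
The aim is to produce a tree automorphism $\tau\in Aut(X^\ast)$ realizing $\phi$ by conjugation, that is $\phi(g)=\tau g\tau^{-1}$ for every $g\in G_1$. Viewing $G_1$ as acting on $X^\ast$ both directly and through the embedding $\phi(G_1)=G_2\le Aut(X^\ast)$, this says the two actions differ by a single automorphism of the tree. The strategy I would follow is to reconstruct $\tau$ from group-theoretic data: recover the partition of $X^\ast$ into level-$n$ subtrees from a lattice of ``localized'' subgroups attached to the saturation data, show that $\phi$ transports this lattice for the first action onto the one for the second, and read off $\tau$ as the induced bijection of vertices.

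Concretely, fix $n$. Since $H_n\le St_{G_1}(n)$, the subgroup $H_n$ fixes every level-$n$ vertex and acts on each subtree $vX^\ast$ with $|v|=n$; let $K_v\trianglelefteq H_n$ be the kernel of this section. Saturation condition (2) gives that $H_n/K_v$ acts level transitively on $vX^\ast$, and faithfulness of the action on $X^\ast$ gives $\bigcap_{|v|=n}K_v=1$, so $\{K_v:|v|=n\}$ records the cylinder partition at level $n$. The complementary subgroups $R_v:=\bigcap_{v'\neq v}K_{v'}=H_n\cap Rist_{G_1}(v)$ are exactly the elements of $H_n$ supported on $vX^\ast$: two of them commute precisely when the vertices are incomparable (disjoint subtrees), while the prefix order is detected by the containments of the ambient rigid stabilizers $Rist_{G_1}(v)\supseteq Rist_{G_1}(v')$ for $v$ a prefix of $v'$, which the $R_v$ approximate. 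Because $G_1$ is weakly branch these supports are meant to be faithfully visible, and the prefix-ordered vertex set of $X^\ast$ is thereby encoded as a lattice of subgroups under commutation and containment. The subgroup $\phi(H_n)\le St_{G_2}(n)$ carries an entirely analogous family $K'_w,R'_w$ for the second action.

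The crux --- and the step I expect to be the main obstacle --- is to show that the abstract isomorphism $\phi$ carries the distinguished lattice $\{R_v\}$ onto $\{R'_w\}$, for this is what yields a well-defined, level- and prefix-preserving bijection $\tau$ of $X^\ast$. This is where weak branchness is essential: one must give an intrinsic, isomorphism-invariant description of the ``support'' of a localized subgroup, so that commutation really detects disjointness of subtrees and containment really detects the prefix order, with no coarser or finer decomposition possible. Establishing that the commutation-and-containment geometry pins down the cylinder partition at every level, and that nontriviality of the rigid stabilizers $Rist_{G_1}(v)$ and $Rist_{G_2}(w)$ keeps each $R_v$ and $R'_w$ genuinely visible, is the technical heart of the argument and is precisely the rigidity input of \cite{MR1890957,MR2162164}. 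Since $\phi$ preserves commutation and containment, granting the intrinsic characterization it must permute the $R_v$ among the $R'_w$, inducing $\tau_n:X^n\to X^n$ with $\phi(R_v)=R'_{\tau_n(v)}$; compatibility of the $\tau_n$ across levels --- matching the non-nested families for $H_n$ and $H_{n+1}$ through the nested ambient rigid stabilizers $Rist_{G_1}(v)\supseteq Rist_{G_1}(v')$ --- is the remaining delicate point, since the $H_n$ themselves are not assumed nested.

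Finally I would assemble the $\tau_n$ into a single $\tau\in Aut(X^\ast)$ --- it is a bijection of the vertex set preserving levels and the prefix order, hence a tree automorphism --- and verify the conjugation $\phi(g)=\tau g\tau^{-1}$ for all $g\in G_1$, not merely on the subgroups $H_n$. By construction $\tau$ intertwines the two actions on each level-$n$ localized subgroup; since the sections of the $H_n$ act level transitively on every subtree and both $G_1,G_2\le Aut(X^\ast)$ act faithfully, the agreement of $\phi(g)$ and $\tau g\tau^{-1}$ propagates from the nested system of cylinders to every vertex of $X^\ast$, giving equality in $Aut(X^\ast)$. Thus $\phi$ is induced by the tree automorphism $\tau$, as required.
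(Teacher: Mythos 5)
The paper itself gives no proof of this proposition --- it is quoted verbatim from \cite[Proposition 2.10.13]{MR2162164} (and ultimately rests on \cite{MR1890957}) --- so your attempt can only be measured against the cited proof, and against that standard it has a genuine gap, in fact two. First, your entire reconstruction hinges on the subgroups $R_v=\bigcap_{v'\neq v}K_{v'}=H_n\cap Rist_{G_1}(v)$ being nontrivial (``genuinely visible''), but nothing in the hypotheses delivers this. Saturation only says that $H_n$ fixes level $n$ pointwise and acts level transitively on each subtree $vX^\ast$; a diagonal-type subgroup shows these conditions are compatible with $H_n\cap Rist_{G_1}(v)=1$ for every $v$: in $Aut(\{0,1\}^\ast)$ the subgroup $H=\{(g,g): g\in A\}\le St(1)$, with $A$ level transitive, fixes level $1$, acts level transitively on both subtrees, and meets each rigid stabilizer of level $1$ trivially. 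Weak branchness makes $Rist_{G_1}(v)$ nontrivial but says nothing about its intersection with the given $H_n$, and you cannot replace the $H_n$ by better subgroups, since the condition $\phi(H_n)\le St_{G_2}(n)$ ties you to the sequence furnished by saturation. So your ``lattice of localized subgroups'' can be entirely trivial, and the cylinder partition is then not encoded at all.

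Second, even where the $R_v$ are nontrivial, the pivotal claim --- that $\phi$ carries the family $\{R_v\}$ onto $\{R'_w\}$ --- is exactly the step you do not prove. The kernels $K_v$ are defined via the tree action, which is not data an abstract isomorphism transports, and ``commutation and containment'' do not single out this family: all the $R_v$ at a fixed level commute pairwise, as do many other families (e.g.\ subgroups of a single abelian normal subgroup), while across levels the families come from different, non-nested subgroups $H_n$, so the containments you hope to exploit for the prefix order simply are not present --- a difficulty you flag yourself. Deferring this step to ``the rigidity input of \cite{MR1890957,MR2162164}'' is circular, since that input \emph{is} the proposition being proved. For contrast, the cited proof avoids intersecting $H_n$ with rigid stabilizers altogether: it exploits that, for $m\ge n$, the $H_n$-orbits on level $m$ (equivalently, the $H_n$-orbit closures on the boundary) are precisely the level-$n$ cylinders, transports orbit and stabilizer data through $\phi$ using $\phi(gH_ng^{-1})=\phi(g)\phi(H_n)\phi(g)^{-1}$, and invokes weak branchness of the ambient groups $G_1,G_2$ --- not of the $H_n$ --- to conclude that the resulting vertex correspondence is induced by a tree automorphism. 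Your closing ``propagation'' step inherits both gaps, since it presupposes the intertwining on the very subgroups whose correspondence was never established.
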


For $\om \in \Om$ define inductively $L_{0,\om}=L_\om$ and $L_{n,\om}=L_{n-1,\om}^2$ for $n\ge 1$.

\begin{lem} \label{five}
Let $\om \in \Om$ and $u\in X^\ast$ with $|u|=n$. Then for any $g \in L_{\sigma^n \om}$ there exists
$h \in St_{L_{n,\om}}(u)$ such that $h_u=g$.
\end{lem}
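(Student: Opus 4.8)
The plan is to argue by induction on $n$, following closely the proof of the first Lemma of this section (the analogous statement with $L_\om$ in place of $L_{n,\om}$); the only new point is that the element we construct must be kept inside the smaller subgroup $L_{n,\om}=L_{n-1,\om}^2$. For $n=0$ there is nothing to prove (take $h=g$), so I assume $n\ge 1$ and write $u=vx$ with $x\in X$ and $|v|=n-1$.

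I first dispose of the base case $n=1$. Since every square of an element of $L_\om$ acts trivially on the first level, $L_{1,\om}\le St(1)$ and therefore $St_{L_{1,\om}}(x)=L_{1,\om}$; the claim becomes that the map sending an element of $L_{1,\om}$ to its section at $x$ has image containing $L_{\sigma\om}$. I would establish this by a direct computation in each of the three cases $\om_1=0,1,2$, assembling from the section formulas already recorded in the first Lemma suitable squares of words in $ab_\om$ and $d_\om$ whose sections at $x$ produce the generators $ab_{\sigma\om}$ and $d_{\sigma\om}$ of $L_{\sigma\om}$. Since $L_{1,\om}$ is normal in $G_\om$ it is normalized by $a$, which interchanges the two first-level subtrees, and this lets me pass from the vertex $1$ to the vertex $0$.

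For the inductive step I peel off the last letter of $u$. The base case applied to the shifted sequence $\sigma^{n-1}\om$ yields $k\in L_{1,\sigma^{n-1}\om}$ with $k_x=g$, and since $L_{1,\sigma^{n-1}\om}=(L_{\sigma^{n-1}\om})^2$ I may write $k=\prod_j t_j^{\,2}$ with $t_j\in L_{\sigma^{n-1}\om}$. The inductive hypothesis at level $n-1$ provides, for each $j$, an element $h_j\in St_{L_{n-1,\om}}(v)$ with $(h_j)_v=t_j$. I then set $h=\prod_j h_j^{\,2}$. By construction $h\in L_{n-1,\om}^2=L_{n,\om}$ and $h$ fixes $v$; because taking the section at the fixed vertex $v$ is a homomorphism on $St(v)$ that commutes with squaring there, $h_v=\prod_j\big((h_j)_v\big)^{2}=\prod_j t_j^{\,2}=k$. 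Consequently $h$ fixes $u=vx$, lies in $L_{n,\om}$, and $h_u=(h_v)_x=k_x=g$, as required.

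The whole argument is therefore driven by the base case: the inductive step is a clean manipulation of the self-similar structure together with the defining relation $L_{n,\om}=L_{n-1,\om}^2$, whereas the surjectivity of the section map on $L_{1,\om}$ is the one genuinely computational and delicate point. I expect the main difficulty to lie exactly here, since which elements of $L_{\sigma\om}$ arise as sections of $L_{1,\om}$ is sensitive both to the value of $\om_1$ and to the precise meaning of the squaring operation defining $L_{1,\om}$; getting the generator $d_{\sigma\om}$ into the image, in particular, is the step I would check most carefully in each of the three cases.
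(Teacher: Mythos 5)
Your proposal is correct and takes essentially the same route as the paper: induction on $n$, with the inductive step lifting each square factor of $k\in L_{1,\sigma^{n-1}\om}$ through the level-$(n-1)$ hypothesis and setting $h=\prod_j h_j^2\in L_{n-1,\om}^2=L_{n,\om}$, exactly as in the paper's proof. The one part you leave as a deferred computation, the base case $n=1$, is precisely what the paper supplies (for $\om_1=0$, the other cases being declared similar): it takes $r=(ab_\om)^2$ and $s=(ab_\om d_\om)^2=(ac_\om)^2$ and the conjugates $r^a$, $(rs^{-1})^{ab_\om}$, $(rs^{-1})^{ab_\om a}$, whose sections produce $ab_{\sigma\om}$ and $d_{\sigma\om}$ at both first-level vertices, confirming your expectation that squares of words in $ab_\om,d_\om$ together with conjugation (legitimate since $L_{1,\om}$ is normal in $G_\om$) suffice.
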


\begin{proof}
Induction on $n$. Suppose $n=1$ and $\om$ starts with $0$ (the other cases being similar). 
Let $r=(ab_\om)^2$ and $s=(ab_\om d_\om)^2=(ac_\om)^2$. Note  $r,s \in St_{L_{1,\om}}(1)$. We have
$$\begin{array}{ccc}
r & \mapsto  & (b_{\sigma \om}a,  ab_{\sigma \om}) \\
r^a & \mapsto  & (ab_{\sigma \om},  b_{\sigma \om}a) \\

(rs^{-1})^{ab_\om a} & \mapsto  &  (ad_{\sigma \om}, d_{\sigma \om}) \\

(rs^{-1})^{ab_\om} & \mapsto  &  (d_{\sigma \om}, ad_{\sigma \om}) \\
\end{array}$$
hence the claim is true for $n=1$.
Now suppose $n>1$ and let $u=vx$ for $v \in X^\ast$ and $x \in X$.
Let $g \in L_{\sigma^n \om}=L_{\sigma \sigma^{n-1}\om}$. By induction assumption, there exists 
$k \in St_{L_{1,\sigma^{n-1}\om}}(x)$ such that $k_x=g$. Since $k \in L_{1,\sigma^{n-1}\om}$, $k$
is of the form $$k=t_1^2\cdots t_m^2 \;\; \text{for some}\;\; t_i \in L_{\sigma^{n-1}\om}$$ 

Again by the induction assumption, there exits $h_i \in St_{L_{n-1,\om}}(v)$ such that $(h_i)_v=t_i$
for $i=1,2,\ldots,m$. Since $h_i \in L_{n-1,\om}$ we have $h_i^2 \in L_{n,\om}$. Now
$$(h_1^2\cdots h_m^2)_u=(h_1^2\cdots h_m^2)_{vx}=((h_1)_v^2\cdots (h_m)_v^2)_x=(t_1^2\cdots t_m^2)_x=k_x=g $$
\end{proof}

\begin{lem}\label{six} For all $\om \in \Om$ and $n$,

\begin{itemize}
\item[i)]  $L_{n,\om} \le St_{L_\omega}(n)$
\item[ii)]  For all $v\in X^\ast$ with $|v|=n$, $L_{n,\om}$ acts level transitively on the sub-tree $vX^\ast$. 
\end{itemize}

\end{lem}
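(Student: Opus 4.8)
The plan is to prove (i) first, by an elementary observation about squaring on the binary tree, and then to deduce (ii) from (i) together with Lemma~\ref{five} and the level-transitivity of the groups $L_{\sigma^n\om}$.

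For (i) the key point is the following general fact: if $H\le Aut(X^\ast)$ satisfies $H\le St(m)$, then the subgroup $H^2$ generated by squares satisfies $H^2\le St(m+1)$. To see this, take any $h\in H$ and any vertex $w$ with $|w|=m$; since $h$ fixes $w$, the section $h_w$ is defined and $(h^2)_w=h_{h(w)}h_w=h_w^2$. Because $X=\{0,1\}$, the top permutation $\pi_{h_w^2}=\pi_{h_w}^2$ is trivial in $S_2$, so $h_w^2$ fixes the first level of the subtree $wX^\ast$; as $w$ ranges over all vertices of level $m$, this means $h^2\in St(m+1)$. Since $St(m+1)$ is a subgroup and the elements $h^2$ generate $H^2$, we get $H^2\le St(m+1)$. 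Now one iterates: for $g\in L_\om$ one has $\pi_{g^2}=\pi_g^2$ trivial, so $L_{1,\om}=L_\om^2\le St(1)$, and applying the fact above with $m=n-1$ along $L_{n,\om}=L_{n-1,\om}^2$ gives $L_{n,\om}\le St(n)$ for all $n$. Intersecting with $L_\om$ yields $L_{n,\om}\le St_{L_\om}(n)$, which is (i). I expect this to be the only genuinely computational step, and it is short precisely because $X$ is binary, so that $S_2$ has exponent $2$.

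For (ii), fix $v\in X^\ast$ with $|v|=n$. By (i) every element of $L_{n,\om}$ lies in $St(n)$ and therefore fixes $v$, so $St_{L_{n,\om}}(v)=L_{n,\om}$ and the section map $\theta_v\colon L_{n,\om}\to Aut(vX^\ast)$, $h\mapsto h_v$, is a well-defined homomorphism; the action of $L_{n,\om}$ on the subtree $vX^\ast$ is by definition the action of the image $\theta_v(L_{n,\om})$. Lemma~\ref{five} (applied with $u=v$) says that for every $g\in L_{\sigma^n\om}$ there is $h\in St_{L_{n,\om}}(v)$ with $h_v=g$, so $\theta_v(L_{n,\om})\supseteq L_{\sigma^n\om}$ inside $Aut(vX^\ast)\cong Aut(X^\ast)$.

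Finally, by the level-transitivity lemma applied to the shifted sequence $\sigma^n\om$, the group $L_{\sigma^n\om}$ acts level transitively on $X^\ast\cong vX^\ast$. Since $\theta_v(L_{n,\om})$ contains this level-transitive subgroup, it is itself level transitive on $vX^\ast$, giving (ii). The point requiring the most care here is conceptual rather than computational: one must first establish (i) so that all of $L_{n,\om}$—not merely some subgroup—fixes $v$ and hence genuinely acts on $vX^\ast$. Once this is in place, (ii) is essentially a repackaging of Lemma~\ref{five} with the already-proved level-transitivity of $L_{\sigma^n\om}$, so I do not anticipate a serious obstacle beyond getting the bookkeeping of sections and the identification $St_{L_{n,\om}}(v)=L_{n,\om}$ right.
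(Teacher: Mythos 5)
Your proof is correct and takes essentially the same route as the paper: your part (i) is precisely the ``straightforward induction'' the paper leaves implicit (sections of squares at fixed level-$(n-1)$ vertices are squares, hence trivial in $S_2$, so $L_{n-1,\om}^2\le St(n)$), and your part (ii) is the paper's argument---lift a transitive element $g\in L_{\sigma^n\om}$ to $h\in St_{L_{n,\om}}(v)$ via Lemma~\ref{five}---merely repackaged through the section homomorphism at $v$. No gaps.
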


\begin{proof}
The first assertion follows from a straightforward induction. For the second assertion, let $vu_1,vu_2
\in v X^\ast$ where $|u_1|=|u_2|$. Since $L_{\sigma^n\om}$ acts level transitively, there is 
$g \in L_{\sigma^n\om}$ such that $g(u_1)=u_2$. By Lemma \ref{five}  there exists $h\in St_{L_{n,\om}}(v)$ such that $h_v=g$. Hence $h(vu_1)=vu_2$.
\end{proof}

\begin{thm}
 Let $\om=\om_1\om_2\ldots$ and $\eta=\eta_1\eta_2\ldots$ be two sequences in $\Om_\infty$.  Then
 the groups $\widehat{L_\om}$ and $\widehat{L_\eta}$ are isomorphic if and only if 
  $\om=\eta$ or $\om_i= \pi(\eta_i)$ for all $i$,  where $\pi=(12) \in Sym\{0,1,2\}$. 
 
\end{thm}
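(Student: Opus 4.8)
The plan is to prove both implications, with the substance lying in the ``only if'' direction. For the ``if'' direction: when $\om=\eta$ there is nothing to show. When $\om_i=(12)(\eta_i)$ for all $i$, I exploit the symmetry of the defining data under the transposition $(12)\in Sym\{0,1,2\}$. By the definitions of $\beta,\zeta,\delta$, the symbol $2$ is exactly where $b$ degenerates, the symbol $1$ where $c$ degenerates, and the symbol $0$ where $d$ degenerates; hence $\delta$ is invariant under $(12)$ while $\beta$ and $\zeta$ are interchanged by it. Consequently the relabeling $b\leftrightarrow c$ defines a Grigorchuk-group isomorphism $G_\om\to G_\eta$ with $a\mapsto a$, $d_\om\mapsto d_\eta$, $ab_\om\mapsto ac_\eta$. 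Since $c=bd$ gives $ac_\eta=(ab_\eta)d_\eta$, we have $\langle ac_\eta,d_\eta\rangle=\langle ab_\eta,d_\eta\rangle=L_\eta$, so this isomorphism carries $L_\om$ onto $L_\eta$; passing to profinite completions gives $\widehat{L_\om}\cong\widehat{L_\eta}$.

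For the ``only if'' direction, assume $\widehat{L_\om}\cong\widehat{L_\eta}$. Using the congruence subgroup property established above I identify the completions with the closures $\bar L_\om,\bar L_\eta\le Aut(X^\ast)$, and by the theorem of Nikolov and Segal \cite{MR2276769} the isomorphism $\phi:\bar L_\om\to\bar L_\eta$ is automatically continuous. The first task is to show $\phi$ is saturated in the sense of Definition \ref{d1}, via the subgroups $H_n=\overline{L_{n,\om}}$. The key point is that each $L_{n,\om}$ is obtained from $L_\om$ by iterating the verbal operation of generating by squares, so $\overline{L_{n,\om}}$ is a closed \emph{characteristic} subgroup of $\bar L_\om$; therefore $\phi$ maps it onto the corresponding subgroup $\overline{L_{n,\eta}}$ of $\bar L_\eta$. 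Both subgroups satisfy the two requirements of Definition \ref{d1} by Lemma \ref{six}, namely containment in the $n$-th level stabilizer and level-transitive action on every subtree $vX^\ast$ with $|v|=n$ (properties inherited by closures). As $\bar L_\om$ and $\bar L_\eta$ are weakly branch, \cite[Proposition 2.10.7]{MR2162164} applies and $\phi$ is induced by a tree automorphism $\alpha$, that is, $\alpha\bar L_\om\alpha^{-1}=\bar L_\eta$.

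It remains to read off the relation between $\om$ and $\eta$ from the equation $\alpha\bar L_\om\alpha^{-1}=\bar L_\eta$. Here I follow the germ analysis of \cite{MR1890957,MR2162164}: writing $\alpha=(\pi_\alpha;\alpha_0,\alpha_1)$ and using the self-similar embedding $\bar L_\om\hookrightarrow S_2\ltimes(\bar L_{\sigma\om}\times\bar L_{\sigma\om})$, conjugation by $\alpha$ constrains the first-level data of $\bar L_\om$ (governed by $\om_1$) against that of $\bar L_\eta$ (governed by $\eta_1$) and, through the sections $\alpha_0,\alpha_1$, reduces to the identical problem for $\sigma\om$ and $\sigma\eta$; tracking along the ray $1^\infty$ the datum of which generator degenerates on the left branch recovers the sequence coordinate by coordinate. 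The decisive difference from the $G_\om$ case is that $L_\om$ contains $d_\om$ but neither $b_\om$ nor $c_\om$ (these lie in the nontrivial coset of $L_\om$ and are visible only through $ab_\om,ac_\om\in L_\om$); thus the datum is recovered only up to interchanging $b$ and $c$, i.e. up to $(12)$, whereas $d$ (the symbol $0$) is pinned down. This forces $\eta_i\in\{\om_i,(12)\om_i\}$ and, since $\alpha$ is a single tree automorphism, the same alternative at every coordinate, giving $\om=\eta$ or $\om_i=(12)(\eta_i)$ for all $i$.

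I expect the main obstacle to be this last step. Two things must be pinned down: that the conjugating automorphism realizes one fixed permutation of $\{0,1,2\}$ uniformly across all levels rather than a level-dependent one, and that no permutation beyond the identity and $(12)$ can arise --- equivalently, that the distinguished role of $d_\om$ inside $L_\om$ is genuinely detected by the conjugation and survives the passage to closures. Making the recursion precise on these points is where the specific structure of $L_\om$, and computations of the type in Lemmas \ref{five} and \ref{six}, must be used, in contrast to the more symmetric situation for $G_\om$.
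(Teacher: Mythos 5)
Your ``if'' direction and your reduction to tree conjugacy are essentially the paper's own argument. For the ``if'' part the paper observes slightly more than you do: when $\om_i=\pi(\eta_i)$ the recursive definitions give the literal equalities $d_\om=d_\eta$ and $b_\om=c_\eta=b_\eta d_\eta$, so that $L_\om=L_\eta$ as subgroups of $Aut(X^\ast)$ --- your relabeling isomorphism is in fact the identity, and your computation $\langle ac_\eta,d_\eta\rangle=\langle ab_\eta,d_\eta\rangle=L_\eta$ is correct. Likewise your saturation argument --- continuity via Nikolov--Segal, $H_n=\overline{L_{n,\om}}$ (which coincides with the paper's $H_n=\overline{H_{n-1}^2}$ by a routine density argument), characteristicity of the closed subgroup topologically generated by squares so that $\phi(H_n)=K_n$, verification of Definition \ref{d1} via Lemma \ref{six}, and then \cite[Proposition 2.10.7]{MR2162164} to get a conjugating tree automorphism --- is exactly the paper's route.

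The genuine gap is the final step, which you yourself flag as unresolved: from $\alpha\bar{L}_\om\alpha^{-1}=\bar{L}_\eta$ you propose a germ/section recursion and assert, without proof, both that the resulting permutation of $\{0,1,2\}$ is the same at every coordinate and that only the identity and $(12)$ can occur; the remark ``since $\alpha$ is a single tree automorphism'' does not by itself deliver uniformity, and carrying out this recursion would amount to redoing the Lavreniuk--Nekrashevych analysis for $L_\om$. The paper instead closes the argument with a device that dissolves both difficulties at once: the continuous homomorphism $p:Aut(X^\ast)\to\mathbb{Z}_2^{\mathbb{N}}$ recording, mod $2$, the number of active vertices on each level. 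Because the target is abelian, conjugate subgroups have equal image, so $p(L_\om)=p(\bar{L}_\om)=p(\bar{L}_\eta)=p(L_\eta)$; this image is the four-element group $\{0,\,p(ab_\om),\,p(d_\om),\,p(ab_\om)+p(d_\om)\}$ with $p(ab_\om)=(1,\beta(\om_1),\beta(\om_2),\ldots)$ and $p(d_\om)=(0,\delta(\om_1),\delta(\om_2),\ldots)$ (mod-$2$ versions of $\beta,\delta$). The element $p(d_\om)$ is the unique nontrivial one beginning with $0$, hence must equal $p(d_\eta)$, pinning down the positions of the symbol $0$; the one remaining global choice of which element is $p(ab_\om)$ then yields either $\om=\eta$ or $\om_i=(12)(\eta_i)$ for all $i$. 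Uniformity of the permutation is automatic here precisely because the ambiguity is a single choice inside a four-element group rather than a per-coordinate one --- this is the idea your sketch is missing, and without it (or a completed germ analysis in its place) your proof of the ``only if'' direction is incomplete.
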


\begin{proof}
If $\om_i= \pi(\eta_i)$ then by the definition of the groups,  $d_\om=d_\eta$ and 
$b_\om =c_\eta =b_\eta d_\eta $ and hence $ab_\om=ab_\eta d_\eta \in L_\eta$. Therefore, in 
this case $L_\om= L_\eta$ as subgroups of $Aut(X^\ast)$.

Now suppose that $\phi: \widehat{L_\om} \to \widehat{L_\eta}$ is an isomorphism. 
As mentioned above, we have $\widehat{L_\om}\cong \overline{L}_\om $ for any $\om \in \Om_\infty$ and 
 each $\overline{L}_\om$ is a branch group. Let $H_0=\overline{L}_\om$ and  $H_n=\overline {H_{n-1}^2}$ for $n\ge1$
and $K_0=\overline{L}_\eta $ and  $K_n=\overline {K_{n-1}^2}$ for $n\ge1$. By Lemma \ref{six} and continuity of $\phi$ we see that $\phi(H_n)=K_n$ and $H_n$  satisfies  the conditions of Definition \ref{d1} and hence $\phi$ is a saturated isomorphism.  It follows by the previous proposition that $\phi$ induced by an automorphism and hence $\bar{L}_\om$ and $\bar{L}_\eta$ are conjugate in $Aut(X^\ast)$. 

Given a vertex $v\in X^\ast$ and $g\in Aut(X^\ast)$, we say that $g$ is active at $v$ if $g_v$ 
acts non-trivially on the first the level.
 Consider the function $p: Aut(X^\ast)\to \mathbb{Z}_2^\mathbb{N}$  where  $p(g)_n$ is the number of active vertices for $g$ on level $n$ modulo $2$. It is straightforward to check that $p$ is a continuous group homomorphism
 (in fact one can check that this gives the abelianization). 
 It follows that $p(\overline{L}_\om)=p(\overline{L}_\eta)$. But by continuity we have $p(\overline{L}_\om)=
 \overline{ p(L_\om)}=p(L_\om)$ and hence $p(L_\om)=p(L_\eta)$. It remains to observe that one can 
 reconstruct the sequence $\om$ from $p(L_\om)$ up to the permutation $\pi$.
 
 We have $$p(ab_\om)=(1,\beta(\om_1),\beta(\om_2),\ldots) $$ $$p(d_\om)=(0,\delta(\om_1),\delta(\om_2),\ldots)$$
 where $\beta(0)=\beta(1)=1,\beta(2)=0$ and $\delta(0)=0,\delta(1)=\delta(2)=1.$
 Note that non-trivial elements of $p(L_\om)$ are $\{p(ab_\om),p(d_\om),p(ab_\om)+p(d_\om)\}$. 
 Given such a set of three sequences, the only sequence whose first entry is $0$ corresponds to $p(d_\om)$
 and one of the remaining ones corresponds to $p(ab_\om)$. If $\delta(\om_i)=0$ then $\om_i=0$.
 If $\delta(\om_i)=1$ then depending on the choice of the  sequence corresponding to $p(ab_\om)$, we will have either $w_i=1$ or $w_i=2$. 
\end{proof}

\bibliographystyle{alpha}
\bibliography{uprofinite}

\end{document}